\numberwithin{equation}{section}
\newtheorem{theorem}{Theorem}[section]
\newtheorem{lemma}[theorem]{Lemma}
\newtheorem{remark}[theorem]{Remark}
\newtheorem{corollary}[theorem]{Corollary}
\newtheorem{problem}[theorem]{Problem}
\newtheorem{definition}[theorem]{Definition}
\newtheorem{example}[theorem]{Example}
\newcommand{\cM}[2]{\mathcal M_{{#1},{#2}}}
\def\cMEE{\cM{E_*}{E}}
\def\CM{\mathcal M}
\def\otw{\otimes_{{\mbox{\tiny{wot}}}}}
\def\Pw{P_\omega}
\begin{document}
\title[The Douglas Property]{The Douglas Property for Multiplier Algebras of Operators}
\author{S. McCullough and T. T. Trent}
\address{Department of Mathematics\\
The University of Florida\\
Box 118105\\
Gainesville, FL 32611-8105 }
\email{sam@ufl.edu}
\address{Department of Mathematics\\
The University of Alabama\\
Box 870350\\
Tuscaloosa, AL 35487-0350}
\email{ttrent@as.ua.edu}
\keywords{corona theorem, reproducing kernels}
\keywords{corona theorem, reproducing kernels}


\begin{abstract}
 For a collection of  reproducing kernels $k$ 
 which includes those for the Hardy space
  of the polydisk and the ball and for the
  Bergman space, 
 $k$ is a \emph{ complete Pick} kernel if and only 
 if the multiplier algebra of $H^2(k)$ has the 
 Douglas property. Consequences for solving the 
 operator equation $AX=Y$ are examined.
\end{abstract}
\maketitle

\section{Introduction}
 \label{sec:blah-blah}
  Let $H$ denote a (complex) Hilbert space and let $B(H)$ denote the
  algebra of  bounded
  operators on $H$. Given $A,B\in B(H),$ when does there
  exist $X\in B(H)$ such that $AX=B$ and, if such an $X$ exists,
  what is the smallest possible norm? 
  The solution to both questions is given by the well-known 
  Douglas Lemma \cite{D}, which says that there is an $X$ of norm at most one
  such that $AX=B$ if and only if $AA^* \succeq  BB^*$. 

  Let $E$ denote a Hilbert space. 
  A theorem of Leech \cite{L} says that the Douglas Lemma remains
  true if the algebra $B(H)$ is replaced by the algebra 
  $\mathcal T_E$ of 
  $E$-valued Toeplitz operators on the unit circle; i.e.,
  if $T_A$ and $T_B$ are bounded analytic Toeplitz operators
  with symbols $A$ and $B$ respectively acting on
  the Hardy space of 
  Hilbert space of $E$-valued functions (denoted by  $H_E^2(\mathbb{D}))$,
  then there is a bounded  analytic Toeplitz operator $T_C$ with
  symbol $C$ of norm at most one such that $T_AT_C=T_B$ if and 
  only if $T_AT_A^* \succeq T_BT_B^*$. 

  If $\mathcal A$ is an algebra of operators on a Hilbert space
  $H$, then $M_n(\mathcal A)$, the $n\times n$ matrices with
  entries from $\mathcal A$ is, in the natural way,
  an algebra of operators on $\oplus_1^n H$, the Hilbert
  space direct sum of $H$ with itself $n$ times. 
  The algebra $\mathcal A$ has the {\it Douglas Property}
  if, given $n$ and $A,B\in M_n(\mathcal A),$ 
  there exists a contraction, $C\in M_n(\mathcal  A)$,
  such that $AC=B$ if and only if $AA^*\succeq BB^*$ 
  (a more flexible, but equivalent, definition is given later).
  \index{Douglas property}  The Douglas
  Lemma and Leech's Theorem say that $B(H)$ and the algebra
  of analytic Toeplitz operators respectively have the Douglas property.
  Fialkow and Salas considered the problem of which $C^*$-algebras, like
  $B(H)$, 
  have the Douglas property \cite{FS}.   This article considers the question of
  which multiplier algebras on reproducing kernel
  Hilbert spaces, like $\mathcal T_E$, have
  the Douglas property.  

  A main result of this article, 
  Theorem \ref{thm:main}, says, for a 
  natural collection of reproducing kernels
  $k$,  if the algebra of multipliers 
  on the corresponding reproducing kernel Hilbert space has
  the Douglas property,  then $k$ is a complete
   Pick kernel.  As a consequence it follows that
   the multiplier algebras of 
  the Hardy spaces 
  on the unit 
  ball and the unit polydisk in dimension $n \ge 2$ and the Bergman 
  spaces on the unit ball and the unit polydisk in all dimensions,
  do not have the Douglas property,  since 
  it is well known that the reproducing kernels of these spaces 
  are not complete Pick kernels \cite{M} \cite{Q}. 
  If  $\mathcal{M}$ is one of these multiplier  algebras,
   then there exist
 $A, B \in \mathcal{M}\otw B(l^2)$ (details on the tensor
  product appear in Subsection \ref{sec:Douglas} below)
%
 for which  
 the equation $A \, X = B$ cannot necessarily be solved 
 in $\mathcal{M}\otw B(l^2),$  even if  $A \, A^* \succeq B\,B^*$. 
 Stated as  Theorem \ref{thm:main-aux}, this is the other
 main result of this paper.  Examples and questions appear
 at the end of the article. 

 In the remainder of this introduction we state 
  precisely the main results, first
 introducing the needed definitions and background. 
 Subsection \ref{subsec:mult-algs} discusses reproducing kernel
 Hilbert spaces and their multiplier algebras.
  The Douglas property is discussed in
  further detail in Subsection \ref{sec:Douglas}.  The main results
 are stated in Subsection \ref{sec:main-things}.


\subsection{Reproducing kernels and multiplier algebras}
 \label{subsec:mult-algs}
 Let $\Omega$ denote a set, which in applications is 
 generally a bounded domain in $\mathbb C^d$. 
 A positive semi-definite function,
  or kernel, $k:\Omega\times \Omega\to\mathbb C,$
 \index{kernel} \index{positive semi-definite function}
 determines, by standard constructions, a Hilbert space $H^2(k)$
  of functions $f:\Omega\to \mathbb C$.  In particular,
   for each $w\in\Omega$ the function $k(\cdot,w)\in H^2(k)$
   reproduces the value of  an $f\in H^2(k)$ at $w$; i.e.,
\[
  f(w)=\langle f,k(\cdot,w) \rangle.
\]    
  Thus, $\langle k(\cdot,w),k(\cdot,z)\rangle = k(z,w)$ and 
  the span of $\{k(\cdot,w):w\in\Omega\}$ is dense in
  $H^2(k)$. \index{reproducing kernel Hilbert space}
  There is little lost by assuming, as we generally will,
  that $k(z,z)>0$ for all $z\in\Omega$. 


  The multipliers \index{multiplier}
   of $H^2(k)$ are those functions $\phi:\Omega\to\mathbb C$
  such that $\phi h\in H^2(k)$ for every $h\in H^2(k)$.
   By the closed graph theorem $\phi$ then determines a bounded operator
   $M_\phi$ on $H^2(k)$ defined by $M_\phi h=\phi h$. 
   Let $\mathcal M(k)$ denote the set of multipliers of $H^2(k)$ 
   identified as the unital subalgebra $\{M_\phi:\phi \in \mathcal M(k)\}$
  of $B(H^2(k))$. \index{Multiplier algebra}
 For example, the Hardy space $H^2(\mathbb{D})$ is a reproducing
  kernel Hilbert space whose kernel $s$ is the Szeg\"o kernel
 \index{Szeg\"o kernel}
\[
  s(z,w) = \frac 1{1- z\overline w}.
\]
 In this case 
  the multiplier algebra $\mathcal{M}(s)$
  is $H^{\infty}(\mathbb{D})$,  
  the algebra  of bounded analytic functions on the unit disk.

\begin{definition}\rm
 \label{def:multalgEEstar}\index{$\cMEE$}\index{multiplier algebra}
  More generally, given Hilbert spaces $E$ and $E_*$,
  let $\cMEE$ denote the corresponding multipliers; i.e.,  those
  functions $\Phi:\Omega \to B(E_*,E)$ such that $\Phi H\in H^2(k)\otimes E$
   for every $H\in H^2(k)\otimes E_*$. 
\end{definition}

    Observe,
    if $e,e_*$ are in $E$ and $E_*$ respectively, then
    $\phi(w)=\langle \Phi(w)e_*,e\rangle$ is in $\mathcal M(k)$.
    Further, if $\Phi \in \cM{E}{F}$ and $\Psi \in \cM{F}{G}$,
    then $\Psi\Phi \in \cM{E}{G}$.

\begin{definition}\rm
 \label{def:nice} \index{nice} \index{kernel, nice}
  We say that a reproducing kernel $k$ is {\it nice} if
  the Hilbert space $H^2(k)$ 
  is separable and 
  there exist $p,q\in \cM{{\ell^2}}{{\mathbb C}}$ such that
\begin{equation}
 \label{eq:oneoverk}
    1= k(z,w)[p(z)p(w)^*-q(z)q(w)^*]
\end{equation}
   for all $z,w\in\Omega.$ 
\end{definition}

 Of course, if $k$ is nice, then $k(z,w)$ is never zero.

 We close this subsection by recalling 
 the notion of a complete Pick  kernel \cite{AM1}.

\begin{definition}\rm
 \label{def:NPk} \index{complete Pick kernel} \index{NP kernel} 
  Suppose $k$ is a positive semi-definite function on $\Omega.$ 
   The kernel $k$ is
  {\it complete Pick kernel,} an {\it NP kernel}
    for short, if for each $\omega\in\Omega$ 
  there exists a 
  positive definite function $L_\omega:\Omega\times \Omega\to \mathbb C$
   so that
\begin{equation}
 \label{eq:np}
  k(y,x)k(\omega,\omega)-k(y,\omega)k(\omega,x)=L_\omega(y,x)k(y,x).
\end{equation}
\end{definition}

\begin{remark}\rm
 \label{rem:whyNP}
   The reason for the names Pick and NP kernel can be found in \cite{AM2}.
  See also \cite{M} and \cite{Q}.
\end{remark}

\begin{remark}\rm
 \label{rem:base-no-matter}
   If $k(z,w)$ never vanishes and  if equation \eqref{eq:np}
   holds for one $\omega$, then  it holds for all
   $\omega$  and thus $k$
   is an NP kernel.   See \cite{MT} for details.  
\end{remark}

\begin{remark}\rm
 \label{rem:nozeros}
  By standard reproducing kernel arguments, the positive semi-definite 
  function $L_\omega$ can be factored as $B(w)^*B(z)$, where
   $B:\Omega\to \mathcal E$,  for some auxiliary Hilbert space
  $\mathcal E$.  When, as in all the examples in this article, $H^2(k)$ is
  separable, $\mathcal E$ can be chosen separable.  In that case, choosing
  a basis $\{e_j\}$ for $\mathcal E$ and letting 
   $b_j(z)= \langle B(z),e_j\rangle$ it follows that 
 \[
   L_\omega(y,x) = \sum b_j(y) b_j(x)^*.
 \]
   
\end{remark}

\subsection{The Douglas Property}
 \label{sec:Douglas}
   Given Hilbert spaces $H$ and $K$ and 
   operators $A\in B(H)$ and $B\in B(K)$, 
   the tensor product $A\otimes B$
   is the operator on the Hilbert space
   $H\otimes K$ determined
   by its action on elementary tensors,
 \[
    A\otimes B (h\otimes f) = Ah\otimes Bf.
 \] 
  It can be verified that $A\otimes B$ is bounded. In 
  fact $\|A\otimes B\|=\|A\|\, \|B\|$. 

  As an example, if $k$ is a kernel, $\varphi\in \CM(k),$
  and $B\in B(K)$, then $\Phi(z)=\varphi(z)B$ is 
  in $\cM{K}{K}$ and corresponds to the operator
  $M_\Phi = M_\varphi\otimes B$. 

\begin{definition}\rm
 \label{def:tensor-product} \index{$\mathcal A\otimes B(\ell^2)$}
  \index{tensor product}
  Given a unital subalgebra $\mathcal A$ of $B(H)$, 
  let $\mathcal A\otimes B(K)$ denote the 
  algebraic tensor product; i.e., finite sums
  $\sum_1^n A_j\otimes B_j$.  Let $\mathcal A\otw B(K)$
  denote the 
  closure, in the 
  weak operator topology (wot),  of $B(H\otimes K)$
  of the algebraic tensor product.  
\end{definition}


\begin{definition}
 \label{def:douglas} \index{Douglas property}
  A wot  closed  
   unital subalgebra $\mathcal{A}$ of $B(H)$ 
  has the Douglas  property: if 
  $A, B \in \mathcal{A} \otw B(\ell^2)$ and
\[
  A\, A^* \succeq B\, B^*,
\]
then there exists
\[
  C \in \mathcal{A}\otw B(\ell^2)
\]
  such that $AC=B$ and $\|C\|\le 1$. 
\end{definition}

 Note that the Douglas property for $\mathcal{A}$ is equivalent, 
 by a compactness argument, to the property, if  $A$ and $B$ are
 any  finite matrices with entries in $\mathcal{A}$ satisfying  
 $A\, A^* \succeq B\, B^*$, then there exists a finite matrix $C$ 
 with entries in $\mathcal{A}$ such that $AC=B$ and $\|C\|\le 1$.

  The following standard lemma says that it makes sense to ask if
  the multiplier algebra $\CM(k)$ corresponding to a kernel $k$
  has the Douglas Property.
  Note that 
\[
   \CM(k)\otimes B(\ell^2)\subset \cM{\ell^2}{\ell^2} 
     \subset B(H^2(k) \otimes \ell^2).
\]

\begin{lemma}
 \label{lem:op-mults}
   If $k$ is a reproducing kernel and $k(z,z)>0$ for all
   $z\in\Omega$, then 
   the algebra $\CM(k)\subset B(H^2(k))$ is wot-closed and moreover
   $\cM{\ell^2}{\ell^2} = \CM(k) \otw B(\ell^2)$.
\end{lemma}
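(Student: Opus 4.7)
The plan is to exploit the key reproducing-kernel identity that each $k(\cdot,w)$ is an eigenvector of $M_\phi^\ast$ with eigenvalue $\overline{\phi(w)}$.

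First I would prove $\mathcal M(k)$ is wot-closed. Suppose $M_{\phi_\alpha}$ converges in wot to some $T\in B(H^2(k))$. Taking $f=g=k(\cdot,w)$ in the defining pairings and using $M_{\phi_\alpha}^\ast k(\cdot,w)=\overline{\phi_\alpha(w)}k(\cdot,w)$, I get that $\overline{\phi_\alpha(w)}k(w,w)$ converges to $\langle T^\ast k(\cdot,w),k(\cdot,w)\rangle$. Because $k(w,w)>0$, this produces a pointwise limit function $\phi(w)$. Testing against an arbitrary $f\in H^2(k)$ shows $T^\ast k(\cdot,w)=\overline{\phi(w)}k(\cdot,w)$, and then the reproducing property gives $(Tf)(w)=\phi(w)f(w)$, so $T=M_\phi$ with $\phi\in\mathcal M(k)$.

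Next I would handle $\mathcal M_{\ell^2,\ell^2}=\mathcal M(k)\otw B(\ell^2)$. The inclusion $\supseteq$ follows from the same wot-closedness argument applied coordinatewise (the symbol of a wot-limit is read off from its action on reproducing kernels tensored with basis vectors of $\ell^2$). For the inclusion $\subseteq$, given $\Phi\in\mathcal M_{\ell^2,\ell^2}$ and an orthonormal basis $\{e_j\}$ of $\ell^2$, I would form the scalar entries $\phi_{ij}(z)=\langle\Phi(z)e_j,e_i\rangle$. Each $\phi_{ij}$ lies in $\mathcal M(k)$ because applying $M_\Phi$ to $h\otimes e_j$ and reading off the $i$-th coordinate gives $\phi_{ij}h\in H^2(k)$, to which the closed graph theorem applies.

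Finally I would use the finite-rank cutoffs: letting $P_N$ be the orthogonal projection of $\ell^2$ onto $\mathrm{span}(e_1,\ldots,e_N)$, the operator $(I\otimes P_N)M_\Phi(I\otimes P_N)$ has symbol $\sum_{i,j\le N}\phi_{ij}(z)E_{ij}$, hence belongs to the algebraic tensor product $\mathcal M(k)\otimes B(\ell^2)$. Since $I\otimes P_N\to I$ strongly and $\|I\otimes P_N\|\le 1$, a standard two-step estimate shows $(I\otimes P_N)M_\Phi(I\otimes P_N)\to M_\Phi$ in sot, and therefore in wot, placing $M_\Phi$ in $\mathcal M(k)\otw B(\ell^2)$. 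The only step with any subtlety is verifying that the pointwise limit $\phi$ extracted in the first paragraph actually multiplies $H^2(k)$ into itself, rather than merely being a well-defined function; this is handled by the eigenvector identity together with density of $\mathrm{span}\{k(\cdot,w)\}$, and it is the calculation to which the hypothesis $k(z,z)>0$ is genuinely needed.
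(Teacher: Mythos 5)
Your proposal is correct and follows essentially the same route as the paper's proof (which proves the slightly more general Lemma~\ref{lem:op-mults-plus} with $\ell^2$ replaced by arbitrary separable $E,E_*$): wot-closedness is extracted from the eigenvector identity for $M_\phi^*$ at reproducing kernels, and the reverse inclusion is established via the same finite-rank cutoffs $(I\otimes P_N)M_\Phi(I\otimes P_N)$. Your only stylistic deviation is testing against $k(\cdot,w)$ itself rather than an arbitrary $f$ with $f(w)\neq 0$, which makes the use of the hypothesis $k(w,w)>0$ slightly more explicit but is otherwise the same argument.
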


  The proof appears in Section \ref{sec:preliminaries}. 

\subsection{Main Results}
 \label{sec:main-things}
  The following is our  main result on multipliers algebras with the 
  Douglas property.

\begin{theorem}
  \label{thm:main} 
    Suppose $k$ is a nice reproducing kernel over the set $\Omega$.
   If $\CM(k)$ has the Douglas property, then $k$ is a
   complete Pick kernel.

   Conversely, if $k$ is a non-vanishing complete
  Pick kernel, then $\CM(k)$ has the
  Douglas property. 
\end{theorem}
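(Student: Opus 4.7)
The plan is to prove the two implications separately, using rather different tools.

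For the converse direction, assume $k$ is a non-vanishing complete Pick kernel. I would invoke the operator-valued Toeplitz--corona theorem for NP kernels (a consequence of the Agler--McCarthy realization theorem): any $A \in \cM{\ell^2}{\ell^2}$ with $M_A M_A^* \succeq \epsilon I$ admits some $X \in \cM{\ell^2}{\ell^2}$ with $AX = I$ and $\|X\|_{\mathrm{mult}} \le \epsilon^{-1/2}$. A standard reduction---using the operator-theoretic Douglas lemma on $H^2(k)\otimes\ell^2$ to rewrite the equation $AC = B$ under $AA^* \succeq BB^*$ in corona form---upgrades this to the full Douglas property for $\CM(k)$.

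For the forward direction, niceness yields $p, q \in \cM{\ell^2}{\mathbb{C}}$ with $1 = k(z,w)[p(z)p(w)^* - q(z)q(w)^*]$. A brief computation shows that the reproducing kernel of $M_p M_p^* - M_q M_q^*$ on $H^2(k)$ is identically $1$, so this difference is a positive rank-one operator; in particular $M_p M_p^* \succeq M_q M_q^*$. Invoking the Douglas hypothesis (via Lemma~\ref{lem:op-mults}) yields a contractive $c \in \cM{\ell^2}{\ell^2}$ with $p(z)c(z) = q(z)$ identically, producing both the scalar identity $p(z)[I - c(z)c(w)^*]p(w)^* = 1/k(z,w)$ and, from contractivity of $M_c$, the $B(\ell^2)$-valued kernel positivity $k(z,w)[I - c(z)c(w)^*] \succeq 0$.

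To extract the NP property I would Kolmogorov-factor $k(I-cc^*) = \Gamma\Gamma^*$ for some $\Gamma\colon \Omega \to B(\mathcal{H},\ell^2)$. The scalar identity then reads $p(z)\Gamma(z)\Gamma(w)^*p(w)^* \equiv 1$, which forces $p(z)\Gamma(z)$ to equal a constant unit functional $v^* \in \mathcal{H}^*$ independent of $z$; equivalently $\Gamma(z)^* \hat p(z) = v$ for all $z$. By Remark~\ref{rem:base-no-matter} it suffices to verify $L_\omega \succeq 0$ at a single base point $\omega$. I would do so by testing the positivity of the $B(\ell^2)$-valued kernel $\Gamma(z)\Gamma(w)^*$ against vectors of the form $\alpha_i k(\omega, z_i)\hat p(z_i)$, then using the rigid constraint $\Gamma^* \hat p = v$ to collapse the resulting expression into the scalar inequality defining $L_\omega \succeq 0$.

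The main obstacle is precisely this final manipulation. The multiplier positivity of $c$ naturally provides inequalities weighted by $k(z_i, z_j)$, whereas the NP condition $L_\omega \succeq 0$ carries the reciprocal weight $1/k(z_i, z_j)$; the rank-one rigidity $p\Gamma = v^*$ is what bridges the two sides, and executing the cancellation so that the test-vector choice produces exactly $L_\omega$ is the technical heart of the forward direction.
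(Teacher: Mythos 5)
Your converse direction is essentially the paper's: they simply cite Ball--Trent \cite{BT}, and your route via Agler--McCarthy realization is the same underlying result. (Your intermediate ``reduction via Toeplitz--corona'' is a bit off---Ball--Trent prove the Leech-type factorization directly, no reduction is needed---but this is cosmetic.)

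For the forward direction the preliminary steps are all correct: niceness gives $p,q$; the identity $\langle (M_pM_p^*-M_qM_q^*)k(\cdot,y),k(\cdot,x)\rangle = 1$ shows $M_pM_p^*\succeq M_qM_q^*$; the Douglas property (via Lemma~\ref{lem:op-mults-plus}) yields a contractive multiplier $c$ with $pc=q$; this gives the scalar identity $p(z)[I-c(z)c(w)^*]p(w)^*=1/k(z,w)$; and the rigidity $p(z)\Gamma(z)=v^*$ constant, for a Kolmogorov factor $\Gamma\Gamma^*=k(I-cc^*)$, follows by the Cauchy--Schwarz equality argument. But the final ``test'' does not work. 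If you test the positivity of the kernel $\Gamma(z)\Gamma(w)^*$ against the vectors $\alpha_i k(\omega,z_i)\,p(z_i)^*$, the rigidity $p\Gamma\equiv v^*$ \emph{collapses too much}: the resulting expression is
\[
 \sum_{i,j}\alpha_i\overline{\alpha_j}\, k(\omega,z_i)\overline{k(\omega,z_j)}\; p(z_i)\Gamma(z_i)\Gamma(z_j)^*p(z_j)^*
 = \Bigl|\sum_i \alpha_i k(\omega,z_i)\Bigr|^2 \ge 0,
\]
a tautology carrying no information about $L_\omega$. More fundamentally, $\Gamma\Gamma^*\succeq 0$ holds automatically for any Kolmogorov factorization; testing its positivity cannot encode the multiplier contractivity $\|M_c\|\le 1$, which is the actual input needed. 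What the paper does instead is genuinely different at this step: fix $\omega$, let $P_\omega$ be the projection onto $H^2_\omega(k)$ (functions vanishing at $\omega$), set $Q=I\otimes P_\omega$, and form the positive operator
\[
  D = P_\omega\, M_p\, Q\,\bigl(I - M_C\,Q\,M_C^*\bigr)\,Q\, M_p^*\, P_\omega,
\]
whose positivity uses $\|M_C\,Q\|\le\|M_C\|\le 1$, a strictly finer use of the contraction than pointwise kernel positivity. A direct eigenvector computation (Lemma~\ref{lem:eigs-adjoints} plus $P_\omega k(\cdot,y)=k(\cdot,y)-\tfrac{k(\omega,y)}{k(\omega,\omega)}k(\cdot,\omega)$) then shows that $\langle Dk(\cdot,y),k(\cdot,x)\rangle$ is exactly $L_\omega(x,y)$. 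You are right that this final cancellation is ``the technical heart,'' but the test-vector idea as stated does not close the gap; the projection $P_\omega$ (or an equivalent device that puts the base point $\omega$ into the operator inequality rather than into the test vectors) is the missing ingredient.
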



  The converse direction in Theorem \ref{thm:main} is a result
  from \cite{BTV}. 

   Theorem \ref{thm:main} applies to some favorite examples.

\begin{corollary}
 \label{cor:Douglas-not}
  The multiplier algebras 
  for each of the spaces $A^2(\mathbb B^m),$  $H^2(\mathbb{D}^n)$, 
  and $H^2(\mathbb B^n)$, 
  for $m \ge 1$ and $n \ge 2$ do not have the Douglas property.

  Here $A^2(\mathbb B^m)$ is the Bergman space of the
  unit ball $\mathbb B^m$ in $\mathbb C^m$; 
   $H^2(\mathbb D^n)$ is the Hardy space of the polydisk
 $\mathbb D^n$ in $\mathbb C^n$; and $H^2(\mathbb B^n)$
  is the Hardy space of the ball. 
\end{corollary}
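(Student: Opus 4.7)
The plan is to apply Theorem \ref{thm:main} contrapositively. The three spaces carry explicit reproducing kernels,
\[
  \frac{1}{(1-\langle z,w\rangle)^{m+1}}, \qquad
  \frac{1}{(1-\langle z,w\rangle)^{n}}, \qquad
  \prod_{j=1}^n \frac{1}{1-z_j\bar w_j},
\]
respectively for $A^2(\mathbb B^m)$, $H^2(\mathbb B^n)$, and $H^2(\mathbb D^n)$. Each Hilbert space is separable and each kernel is everywhere nonzero. Two things must then be checked in order to invoke Theorem \ref{thm:main}: that each kernel is nice in the sense of Definition \ref{def:nice}, and that none of these is a complete Pick kernel in the stated range of parameters. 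The second is classical (see \cite{M} and \cite{Q}); the substantive step is the niceness verification.

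For niceness I need to factor $1/k(z,w) = p(z)p(w)^* - q(z)q(w)^*$ with $p,q \in \cM{\ell^2}{\mathbb C}$. In each case $1/k(z,w)$ is a polynomial in $z_i$ and $\bar w_j$, and the idea is to split it by parity of total degree. For the polydisk I expand
\[
  \prod_{j=1}^n (1-z_j\bar w_j) = \sum_{I\subseteq\{1,\ldots,n\}} (-1)^{|I|}\, z_I\bar w_I,
  \qquad z_I:=\prod_{j\in I}z_j,
\]
and group the even-cardinality subsets against the odd. For the two ball-type kernels I use the binomial expansion
\[
  (1-\langle z,w\rangle)^N = \sum_{k=0}^N (-1)^k\binom{N}{k} \sum_{|\alpha|=k} \frac{k!}{\alpha!}\, z^\alpha\bar w^\alpha
\]
with $N=m+1$ or $N=n$, and separate the sum according to the parity of $k$. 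In all three cases each of the resulting pieces is visibly of the form $\sum_\ell r_\ell(z)\overline{r_\ell(w)}$ for finitely many scalar polynomials $r_\ell$. Assembling these $r_\ell$ into finite row vectors produces $p$ and $q$; since polynomials are bounded multipliers of each of these reproducing kernel Hilbert spaces and the row vectors are finite, we obtain $p,q \in \cM{\ell^2}{\mathbb C}$, establishing \eqref{eq:oneoverk}.

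With niceness in hand, Theorem \ref{thm:main} applies and its contrapositive yields the failure of the Douglas property for all three multiplier algebras. I do not anticipate any serious obstacle: the hardest step is the bookkeeping in the niceness check, and it reduces to rewriting two elementary polynomial identities. The parameter restrictions ($m \ge 1$ for the Bergman ball, $n \ge 2$ for the Hardy ball and polydisk) enter only through the cited failure of the complete Pick property, not through the niceness argument.
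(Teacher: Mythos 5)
Your proof is correct and follows the same strategy as the paper: invoke Theorem \ref{thm:main} contrapositively, citing the well-known failure of the complete Pick property for these kernels. The paper simply asserts niceness as ``clear,'' whereas you supply the details via the parity decomposition of $1/k$ into $p(z)p(w)^*-q(z)q(w)^*$; this is a sound and natural way to justify that assertion, and nothing in it needs correction.
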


\begin{proof}
   It is clear that these are nice
   reproducing kernel Hilbert spaces. 
   Further, it is well known, and easy to verify,
   that their respective  kernels are not complete Pick kernels. 
\end{proof}

  It turns out that without the Douglas property it is not always
  possible to factor, even dropping the norm constraint.

\begin{theorem}
 \label{thm:main-aux}
  Let $ \mathcal{A}$ denote  the multiplier algebra on any of 
  the  Hilbert spaces
  $A^2(\mathbb B^m), H^2(\mathbb{D}^n),$ and 
  $H^2(\mathbb B^n)$ for $m \ge 1$ and $n \ge 2$.
  The equation $A \, X = B$ for 
  $A, B \in \mathcal{A}\otimes B(l^2)$ and $A\, A^* \succeq B\, B^*$ 
   cannot always be solved for $X$ in $\mathcal{A}\otimes B(l^2)$.
\end{theorem}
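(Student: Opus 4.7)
The plan is to build on the failure of the Douglas property established in Theorem~\ref{thm:main} and recorded in Corollary~\ref{cor:Douglas-not}, and then to upgrade ``absence of a norm-one solution'' to ``absence of any solution.'' Fix $\mathcal A$ to be one of the multiplier algebras listed. By Corollary~\ref{cor:Douglas-not}, $\mathcal A$ fails the Douglas property, and by the compactness remark immediately following Definition~\ref{def:douglas}, this failure is already witnessed among finite matrices: there exist $N\in\mathbb N$ and $A,B\in M_N(\mathcal A)$ with $AA^*\succeq BB^*$ but no $C\in M_N(\mathcal A)$ of norm at most one satisfying $AC=B$. Embedding $M_N(\mathcal A)$ into $\mathcal A\otimes B(\ell^2)$ as the top-left $N\times N$ block places these $A$ and $B$ in the required algebra.

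The next step is a compression reduction. Letting $P_N$ denote the projection of $\ell^2$ onto its first $N$ coordinates, the chosen embedding forces $AP_N=A=P_NA$ and likewise $BP_N=B=P_NB$. Consequently, any $X\in\mathcal A\otimes B(\ell^2)$ with $AX=B$ yields $A(P_NXP_N)=(AP_N)XP_N=AXP_N=BP_N=B$, and $P_NXP_N\in M_N(\mathcal A)$. So solvability of $AX=B$ in $\mathcal A\otimes B(\ell^2)$ is equivalent to solvability in $M_N(\mathcal A)$, and it suffices to exhibit a witness $(A,B)$ with $AA^*\succeq BB^*$ for which $AX=B$ has no solution in $M_N(\mathcal A)$ at all.

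The crux, and the main obstacle, is the gap between the failure of a norm-one solution---for free from the absence of the Douglas property---and the failure of \emph{any} solution. My plan is to sharpen the construction used in the proof of Theorem~\ref{thm:main}. There, the witness matrices $A$ and $B$ are assembled from multiplier data evaluated at finitely many test points $w_1,\dots,w_m\in\Omega$ chosen to violate the complete Pick condition; any solution $X$ of $AX=B$ then corresponds to an operator-valued multiplier realizing prescribed interpolation values at the $w_j$. By choosing the interpolation data so that no multiplier interpolant exists at all (and not merely no contractive one), one ensures that $B$ lies outside the entire range of the map $Y\mapsto AY$ on $M_N(\mathcal A)$. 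I expect this pointwise-obstruction step to be the technical heart of the argument: it requires analyzing the range $A\cdot M_N(\mathcal A)$ by evaluation at the $w_j$ and exploiting the specific non-Pick geometry of the Bergman and Hardy kernels on balls and polydisks.
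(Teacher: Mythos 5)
Your compression reduction is correct as far as it goes, but it leads you into a dead end that the paper's proof deliberately avoids. After the reduction you would need finite matrices $A,B\in M_N(\mathcal A)$ with $AA^*\succeq BB^*$ but \emph{no} multiplier solution $X\in M_N(\mathcal A)$ to $AX=B$, not merely no contractive one. For the row/column systems that naturally arise here---and in particular for the witnesses the paper uses---such a finite example does not exist. Concretely, for the bidisk witness $[I,\,S^NW,\dots,SW^N]\,[C_{ij}]=[S^N,\dots,W^N]$ one can simply take $C_{1k}=z^{N-k+1}w^{k-1}$ and all other entries zero, yielding a polynomial, hence multiplier, solution; only its norm is large. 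More generally, your proposed sharpening (``choose interpolation data so that no multiplier interpolant exists at all'') cannot work at finitely many points: prescribing finitely many Taylor or point values always admits a polynomial interpolant, which is automatically a bounded multiplier on these spaces, so you can never rule out all solutions by a finite pointwise obstruction.

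The paper's actual mechanism is different and is the idea your proposal is missing. Using the algebraic identities of Lemmas~\ref{lem:bergman}, \ref{lem:bidisk}, \ref{lem:ball}, the authors produce, for each $N$, a pair $A_N,B_N$ (a single nonzero row) with $A_NA_N^*\succeq B_NB_N^*$ and show, by an $L^2$ Fourier-coefficient estimate, that \emph{every} multiplier solution $C$ of $A_NC=B_N$ must satisfy $\|C\|\ge\sqrt{N+1}$ (solutions exist, but their minimal norm grows without bound). The key step is then a normalized infinite direct sum: set $A=\bigoplus_N A_N/\|A_N\|$ and $B=\bigoplus_N B_N/\|B_N\|$ acting on $\bigoplus_N\bigoplus_{j=1}^{N+1}H^2$. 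Then $AA^*\succeq BB^*$ still holds, but any multiplier solution $X$ would have its $(N,N)$ diagonal block solving $A_NX_{NN}=B_N$, forcing $\|X\|\ge\sup_N\|X_{NN}\|=\infty$. In short: the upgrade from ``no contractive solution'' to ``no solution at all'' is achieved not by a finite witness but by aggregating a sequence of witnesses whose solution norms blow up, and this is precisely the device your proposal lacks.
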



   
  The next section contains routine, but necessary, preliminary results.
  The proofs of Theorems \ref{thm:main} and \ref{thm:main-aux}
  occupy Sections \ref{sec:main} and \ref{sec:main-aux} 
  respectively.  The paper closes with examples and questions
  in Section \ref{sec:egandq}.

\section{Preliminary Results}
 \label{sec:preliminaries}
  This section collects  a few preliminary observations
  used in the proofs of Theorem \ref{thm:main} and \ref{thm:main-aux}.

\begin{lemma}
 \label{lem:eigs-adjoints}
   If $\Phi\in \cM{E}{E_*},$  $e\in E$ and $w\in\Omega$, then 
  \[
    M_{\Phi}^*  k(\cdot,w)e =  k(\cdot,w) \Phi(w)^* e.  
 \]
\end{lemma}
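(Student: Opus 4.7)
The plan is to verify the identity by pairing both sides against a dense set of vectors in $H^2(k)\otimes E_*$, namely the elementary tensors $h\otimes e_*$ with $h\in H^2(k)$ and $e_*\in E_*$. First I would fix the interpretation: $k(\cdot,w)\,e$ stands for the $E$-valued function $z\mapsto k(z,w)\,e$ inside $H^2(k)\otimes E$, and $M_\Phi:H^2(k)\otimes E_*\to H^2(k)\otimes E$ is given on elementary tensors by $M_\Phi(h\otimes e_*)(z)=h(z)\,\Phi(z)e_*$.

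The key ingredient is the vector-valued reproducing property: for any $F\in H^2(k)\otimes E$, viewed as a function $F:\Omega\to E$, one has
\[
 \langle F,\, k(\cdot,w)\,e\rangle_{H^2(k)\otimes E}=\langle F(w),\,e\rangle_E,
\]
and analogously in $H^2(k)\otimes E_*$. This follows from the definition of the Hilbert tensor product and the standard reproducing property of $k$.

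With this tool in hand I would compute the two inner products and see they match. On the one hand,
\[
 \langle M_\Phi^*\,k(\cdot,w)e,\,h\otimes e_*\rangle
    =\langle k(\cdot,w)e,\,M_\Phi(h\otimes e_*)\rangle
    =\overline{\langle h(w)\Phi(w)e_*,\,e\rangle_E},
\]
using the reproducing property applied to $M_\Phi(h\otimes e_*)$ evaluated at $w$. On the other hand,
\[
 \langle k(\cdot,w)\,\Phi(w)^*e,\,h\otimes e_*\rangle
    =\langle k(\cdot,w),h\rangle_{H^2(k)}\,\langle \Phi(w)^*e,\,e_*\rangle_{E_*}
    =\overline{h(w)}\,\langle \Phi(w)^*e,\,e_*\rangle_{E_*}.
\]
Taking the complex conjugate in the first display and using $\overline{\langle \Phi(w)e_*,e\rangle_E}=\langle e,\Phi(w)e_*\rangle_E=\langle \Phi(w)^*e,e_*\rangle_{E_*}$ shows the two expressions are equal. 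Since elementary tensors span a dense subspace of $H^2(k)\otimes E_*$, the identity follows.

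The computation is entirely routine; there is no real obstacle. The only point requiring a bit of care is keeping straight which inner product is being used and the placement of complex conjugates when passing between $\langle\Phi(w)e_*,e\rangle_E$ and $\langle\Phi(w)^*e,e_*\rangle_{E_*}$.
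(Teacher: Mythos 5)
Your argument is correct and follows essentially the same route as the paper: apply the vector-valued reproducing property to pull the evaluation at $w$ out, then move the adjoint $\Phi(w)^*$ across. The only cosmetic difference is that you pair against elementary tensors $h\otimes e_*$ and invoke density, whereas the paper pairs directly against an arbitrary $F\in H^2(k)\otimes E_*$, making the density step unnecessary.
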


\begin{proof}  
   Given $F\in H^2_E(k)$, 
 \[
  \begin{split}
  \langle F, M_\Phi^* k(\cdot,w)e \rangle
   = & \langle \Phi F, k(\cdot,w)e \rangle \\
   = & \langle \Phi(w) F(w), e \rangle \\ 
   = & \langle F(w),\Phi(w)^* e \rangle \\
   = & \langle F,  k(\cdot,w) \Phi(w)^* e \rangle.
 \end{split}
 \]
\end{proof}

  The following is a slight generalization of Lemma \ref{lem:op-mults}.

\begin{lemma}
 \label{lem:op-mults-plus}
   Given separable
    Hilbert spaces $E$ and $E_*$, 
   the space of multipliers $\cM{E}{E_*}$ 
   is equal to $\CM(k) \otw B(E_*,E)$. 
\end{lemma}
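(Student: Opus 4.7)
The plan is to prove both containments, leaning on Lemma \ref{lem:eigs-adjoints} and the density of the span of $\{k(\cdot,w)e : w\in \Omega,\ e\in E\}$ in the appropriate tensor product.

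For $\CM(k)\otw B(E_*,E)\subseteq \cM{E}{E_*}$, I first observe that an elementary tensor $\phi\otimes T$ with $\phi\in\CM(k)$ and $T\in B(E_*,E)$ is exactly the multiplication operator for the $B(E_*,E)$-valued function $z\mapsto \phi(z)T$, so the algebraic tensor product $\CM(k)\otimes B(E_*,E)$ sits inside $\cM{E}{E_*}$. It then suffices to show that the set of multiplication operators is WOT-closed. Suppose $M_{\Phi_\alpha}\to X$ in WOT; Lemma \ref{lem:eigs-adjoints} applied to the adjoints gives, for each $w\in\Omega$, $e\in E$, and $e_*\in E_*$,
\[
 k(w,w)\,\langle \Phi_\alpha(w)\,e_*,\,e\rangle \;\longrightarrow\; \langle X\,k(\cdot,w)e_*,\;k(\cdot,w)e\rangle .
\]
Since $k(w,w)>0$ and $\|\Phi_\alpha(w)\|\le \|M_{\Phi_\alpha}\|$ (with the latter uniformly bounded on the convergent net), the pointwise weak limits assemble into an operator $\Phi(w)\in B(E_*,E)$. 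Checking $X=M_\Phi$ on the dense span of kernel vectors finishes this direction, with $\Phi\in\cM{E}{E_*}$ automatic from the fact that $X$ maps into $H^2(k)\otimes E$.

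For $\cM{E}{E_*}\subseteq\CM(k)\otw B(E_*,E)$, I fix orthonormal bases $\{e_i\}\subset E$ and $\{f_j\}\subset E_*$ and put $\phi_{ij}(z)=\langle \Phi(z)f_j,e_i\rangle$, each of which lies in $\CM(k)$ by the observation following Definition \ref{def:multalgEEstar}. With $P_N^E$ and $P_N^{E_*}$ the projections onto the first $N$ basis vectors, and $T_{ij}\in B(E_*,E)$ the rank-one operator $e_*\mapsto \langle e_*,f_j\rangle e_i$, the truncated multiplier $\Phi_N(z)=P_N^E\Phi(z)P_N^{E_*}$ satisfies
\[
 M_{\Phi_N}=\sum_{i,j\le N} M_{\phi_{ij}}\otimes T_{ij}\in\CM(k)\otimes B(E_*,E).
\]
The factorization $M_{\Phi_N}=(I\otimes P_N^E)\,M_\Phi\,(I\otimes P_N^{E_*})$, together with $I\otimes P_N^E\to I$ and $I\otimes P_N^{E_*}\to I$ in SOT as $N\to\infty$, gives $M_{\Phi_N}\to M_\Phi$ in SOT and hence in WOT, placing $M_\Phi$ in $\CM(k)\otw B(E_*,E)$.

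The only genuinely delicate step is the WOT-closedness half: recovering $\Phi(w)$ as a bona fide bounded operator from the sesquilinear limits requires uniform control of $\|M_{\Phi_\alpha}\|$, after which the identification $X=M_\Phi$ is a routine reproducing-kernel calculation. The truncation argument for the reverse inclusion is essentially bookkeeping with tensors and finite-rank projections.
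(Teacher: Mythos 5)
Your overall structure matches the paper's: show the algebraic tensor product lands inside $\cMEE$ and that $\cMEE$ is WOT-closed, then get the reverse inclusion by truncating $M_\Phi$ with finite-rank projections $I\otimes Q_n$ and $I\otimes P_n$ and passing to the SOT (hence WOT) limit. The truncation half is fine and is essentially the paper's argument.

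The flaw is in the WOT-closedness half, precisely at the step you yourself flag as delicate. You assert that $\|M_{\Phi_\alpha}\|$ is ``uniformly bounded on the convergent net,'' but this is false in general: a WOT-convergent \emph{net} in $B(H)$ need not be bounded (unlike a WOT-convergent sequence, where the uniform boundedness principle applies). For instance, indexing by finite subsets $F$ of $H\times H$, one can pick a unit vector $u_F$ orthogonal to the span of all vectors appearing in $F$ and set $T_F=|F|\,(u_F\otimes u_F)$; then $T_F\to 0$ in WOT while $\|T_F\|\to\infty$. So this appeal cannot be used to conclude that the limit sesquilinear forms are bounded.

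Fortunately the conclusion is still correct, and the fix is exactly what the paper does implicitly: the boundedness of $\Phi(w)$ should come from the boundedness of the \emph{limit} operator $X$, not from the net. Your own identity
\[
 k(w,w)\,\langle \Phi_\alpha(w)e_*,e\rangle \;\longrightarrow\; \langle X\,k(\cdot,w)e_*,\,k(\cdot,w)e\rangle
\]
shows the limiting form $(e_*,e)\mapsto k(w,w)^{-1}\langle X\,k(\cdot,w)e_*,\,k(\cdot,w)e\rangle$ is dominated by $\|X\|\,\|e_*\|\,\|e\|$, since $\|k(\cdot,w)\|^2=k(w,w)$. Hence there is a bounded $\Phi(w)\in B(E_*,E)$ representing it, with no boundedness assumption on the net. (The paper pairs $f\otimes e_*$ against $k(\cdot,z)e$ with $f(z)\neq 0$ rather than kernel against kernel, but the same point applies: the limit form is controlled by $\|T\|$.) With this correction your proof goes through and is the same as the paper's.
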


\begin{proof}
  The proof, in outline, involves showing that
  $\cM{E}{E_*}$ is wot-closed and  contains the algebraic
  tensor product $\CM(k)\otimes B(E_*,E)$ and 
  hence $\CM(k)\otw B(E_*,E)\subset \cM{E}{E_*}$.
  The reverse inclusion follows from the fact that,
  since $E$ and $E_*$ are separable, there exists
  sequences of finite rank 
  projections $P_n$ and $Q_n$ which converge, in
  the strong operator topology,  to the identities
  on $E$ and $E_*$ respectively.

   For the details, suppose $(\phi_\alpha)$ is a net
   from $\cM{E}{E_*}$ which converges, in the weak operator topology 
  of $B(H^2(k)\otimes E_*, H^2(k)\otimes E)$, to some $T$.  Fix $z\in \Omega$
   and define $W_z:E\to H^2(k)\otimes E$ by $W_z e = k(\cdot,z)e$, and
   $V:E_* \to H^2(k)\otimes E_*$ by $Ve_* = e_*$ (the constant function). 
   Let $\Phi(z)= W_z^* T V:E\to E_*$.   Then 
   $V^* M_{\phi_\alpha}^* W_z$ converges WOT to $\Phi(z)^*$.  For  
  $f\in H^2(k)$, $e\in E$ and $e_*\in E_*$, 
   using Lemma \ref{lem:eigs-adjoints},  compute
 \begin{equation}
 \label{eq:Phi0}
  \begin{split}
   \langle f\otimes e_*, M_{\phi_\alpha}^* k(\cdot,z)e\rangle 
     = & f(z) \langle e_*, \phi_\alpha(z)^* e\rangle \\
     = & f(z) \langle e_*, V^* M_{\phi_\alpha}^* W_z e \rangle.
  \end{split}
 \end{equation}
  The left hand side of equation \eqref{eq:Phi0} converges to
\begin{equation}
 \label{eq:Phi1}
  \langle f\otimes e_*, T^* k(\cdot,z)e \rangle;
\end{equation}
 whereas the right hand side of equation \eqref{eq:Phi0} converges to,
\begin{equation}
 \label{eq:Phi2}
   f(z)\langle e_*, \Phi(z)^* e\rangle 
         = \langle f \otimes e_*, k(\cdot,z)\Phi(z)^* e\rangle 
\end{equation}
   Combining equations \eqref{eq:Phi1} and \eqref{eq:Phi2} gives,
\[
  T^* k(\cdot,z)e= k(\cdot,z)\Phi(z)^* e.
\]
   Thus, $T = M_\Phi$ and $T$ is in $\cM{E}{E_*}$.


%
%

   Now let $\Phi\in \cM{E}{E_*}$ be given. 
   Note that $(I\otimes Q_n)M_\Phi (I\otimes P_n)$ is
   in the algebraic tensor product $\CM(k)\otimes B(E_*,E)$
   for each $n$ and also converges wot to $\Phi$. 
   Hence $M_\Phi \in \CM(k)\otw B(E_*,E)$. 
\end{proof}

\section{The proof of Theorem \ref{thm:main}}
 \label{sec:main}
  This section contains the proof of Theorem \ref{thm:main}.
  
  The assumption that $k$
  is nice implies  there exist $p,q\in \cM{{\ell^2}}{{\mathbb C}}$ 
  satisfying equation \eqref{eq:oneoverk}.  From 
  Lemma \ref{lem:eigs-adjoints},
\[
  \langle M_q^* k(\cdot,y), M_q^* k(\cdot,x)\rangle 
    = \langle q(x) q^*(y) k(\cdot,y),k(\cdot,y)\rangle,
\]
  where $q^*(y)=q(y)^*$.  Hence, 
 \begin{equation}
  \label{eq:nice}
   \langle [M_pM_p^*-M_qM_q^*]k(\cdot,y),k(\cdot,x)\rangle =1.
 \end{equation}
   Thus, $M_pM_p^*-M_qM_q^*\succeq 0$. 
   Hence, if $\mathcal \CM(k)$ has the Douglas
  property, then, using
  Lemma \ref{lem:op-mults-plus},   
  there exists $C\in \cM{\ell^2}{\ell^2}$ such that
  $q=pC$ ($M_p=M_qM_C)$ and $\|M_C\|\le 1$. 
  The remainder of the proof involves exploiting the
  resulting identity,
\[
  M_pM_p^*-M_qM_q^* = M_p[I-M_C M_C^*]M_p^*.
\]  
 
   Fix a point $\omega\in \Omega$ and let $\mathcal H^2_\omega(k)$ denote
   those $f\in H^2(k)$ which vanish at $\omega$.  
   Let $\Pw$ denote the projection onto $H^2_\omega(k)$. 
   The operator 
 \[
   D=\Pw M_p [I\otimes \Pw](I-M_C[I\otimes \Pw]M_C^*)[I\otimes \Pw] M_p^* \Pw 
 \]
   is positive semi-definite since $\|M_C\|\le 1$. 
   Thus the function $L_\omega(x,y)$ defined by
 \[
   \Omega\times \Omega \ni (x,y) \mapsto 
    L_\omega(x,y):=\langle  D k(\cdot,y), k(\cdot,x) \rangle
 \]
  is positive semi-definite.  

   Observe that 
 \begin{equation}
  \label{eq:1}
   \Pw k(\cdot,w)= k(\cdot,y)- 
      \frac{k(\omega,y)}{k(\omega,\omega)} k(\cdot,\omega).
 \end{equation}
    Further  $M_p^* k(\cdot,y) = p(y)^* k(\cdot,y)$ and
  similarly for $M_C^*$.  Thus, 
 \begin{equation}
  \label{eq:L1}
   [I\otimes \Pw]  M_p^*  \Pw k(\cdot,w) =  p^*(w) \Pw k(\cdot,w)
 \end{equation}
   and similarly 
 \begin{equation}
  \label{eq:L2}
   \begin{split}
    [I\otimes \Pw]M_C^* [I\otimes \Pw]p^*(y) k(\cdot,y) 
       = &  C^*(y)p^*(y) \Pw  k(\cdot,y) \\
       = & q(y)^* \Pw k(\cdot,y).
  \end{split}
 \end{equation}
   Combining equations \eqref{eq:L1}, \eqref{eq:L2}, and 
   \eqref{eq:nice} gives 
 \begin{equation}
  \label{eq:2}
  \begin{split}
  k(x,y) & L_\omega(x,y) \\
    = & \langle  [(p(x)p(y)^* - q(x)q(y)^*)k(x,y)] \Pw k(\cdot,y),
         \Pw k(\cdot,x)  \rangle \\
    = & \langle \Pw k(\cdot,y), \Pw k(\cdot,x)\rangle.
  \end{split}
 \end{equation}
   
   From equations \eqref{eq:1} and \eqref{eq:2} it follows that 
 \[
    k(x,y) - \frac{k(x,\omega)k(\omega,y)}{k(\omega,\omega)} = k(x,y) L_\omega(x,y)
 \]
  and $k$ is a complete Pick kernel.

\section{The Proof of Theorem \ref{thm:main-aux}}
 \label{sec:main-aux}
   Theorem \ref{thm:main-aux} is really three theorems, 
  one each for 
  the Bergman spaces of the ball $\mathbb B^m$ in $\mathbb C^m$;
  the Hardy spaces $\mathbb B^m$ for $m\ge 2$; and
   the Hardy spaces of the polydisk $\mathbb D^m$ for $m\ge 2$. 
   Accordingly, this section starts with three lemmas -  one about
  each of these collection of spaces -  before turning to the 
  proof of Theorem \ref{thm:main-aux}.

   Given a an indexed set $S=\{h_j:j\in J\}$ of vectors
   from a Hilbert space $H$, let $[h_j:j\in J]$ denote the
   closed linear span of $S$ in $H$ and let $\mathrm{Proj}[h_j:j\in J]$
   denote the orthogonal projection onto $[h_j:j\in J]$. 

\begin{lemma}
 \label{lem:bergman}
  Let $B$ denote $M_z$ on $A^2(\mathbb{D})$, the 
  Bergman space on the unit disk.  
  For $N = 1, 2, \dots$
\[
  \begin{split}
    I + N \, B^{N+1}B^{*(N+1)} - (N+1)B^NB^{*N} 
    & = \mathrm{Proj} \,[0, 1, \dots, z^{N-1}]\\
      & = \sum_{j=0}^{N-1} (j+1)z^j \otimes \, z^j.
\end{split}
\]
\end{lemma}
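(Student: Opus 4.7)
The plan is to work in the standard orthonormal basis of $A^2(\mathbb{D})$ given by $e_n := \sqrt{n+1}\, z^n$ for $n \ge 0$, in which $B = M_z$ acts as a weighted unilateral shift. First, I compute the weights: $B e_n = \sqrt{n+1}\, z^{n+1} = \sqrt{\tfrac{n+1}{n+2}}\, e_{n+1}$, so $B$ is the shift with weights $w_n = \sqrt{(n+1)/(n+2)}$. Iterating gives
\[
 B^N e_n = \sqrt{\tfrac{n+1}{n+N+1}}\, e_{n+N},
 \qquad
 B^{*N} e_m = \begin{cases} \sqrt{\tfrac{m-N+1}{m+1}}\, e_{m-N}, & m \ge N, \\ 0, & m < N.\end{cases}
\]
Consequently $B^N B^{*N}$ is diagonal in this basis, with $B^N B^{*N} e_m = \tfrac{m-N+1}{m+1}\, e_m$ for $m \ge N$ and $0$ otherwise.

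Next, I substitute into the left-hand side $T_N := I + N B^{N+1}B^{*(N+1)} - (N+1) B^N B^{*N}$ and evaluate the diagonal entry on $e_m$. For $m \le N-1$, both shift-product terms vanish and the eigenvalue is $1$. For $m = N$, the $B^{N+1}B^{*(N+1)}$ term vanishes and the eigenvalue is $1 - (N+1)\cdot\tfrac{1}{N+1} = 0$. For $m \ge N+1$, a direct calculation with a common denominator $m+1$ gives
\[
 1 + N\cdot\tfrac{m-N}{m+1} - (N+1)\cdot\tfrac{m-N+1}{m+1}
   = \tfrac{(m+1)+N(m-N)-(N+1)(m-N+1)}{m+1} = 0,
\]
after expanding $(N+1)(m-N+1) = Nm + m - N^2 + 1$ and collecting. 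Thus $T_N$ is the orthogonal projection onto $\mathrm{span}\{e_0,\dots,e_{N-1}\} = \mathrm{span}\{1,z,\dots,z^{N-1}\}$.

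Finally, I identify this projection with the stated rank-one sum. The rank-one operator $u \otimes v$ sends $h$ to $\langle h,v\rangle u$, so the projection onto $\mathrm{span}\{e_j\}$ is $e_j \otimes e_j$. Since $e_j = \sqrt{j+1}\, z^j$, we have $e_j \otimes e_j = (j+1)\, z^j \otimes z^j$, and summing for $j = 0,\dots,N-1$ yields the claimed identity. The only nontrivial step is the algebraic cancellation for $m \ge N+1$; everything else is a routine diagonalization in the orthonormal basis.
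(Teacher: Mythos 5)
Your proof is correct, and it takes a genuinely different route from the paper. The paper's argument applies the operator identity to reproducing kernels $k_w$ and pairs with $k_z$, reducing everything to the scalar power-series identity
\[
\frac{1 + N(\overline w z)^{N+1} - (N+1)(\overline w z)^N}{(1-\overline w z)^2} = \sum_{j=0}^{N-1}(j+1)(\overline w z)^j,
\]
which is then verified by algebraic manipulation of rational functions. You instead diagonalize in the orthonormal basis $e_n = \sqrt{n+1}\,z^n$, observing that $B^N B^{*N}$ is diagonal and then checking that the operator $T_N$ has eigenvalue $1$ on $e_0,\dots,e_{N-1}$ and $0$ elsewhere. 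Both are sound; the kernel computation fits the paper's reproducing-kernel theme and transparently produces the formal power series on the right-hand side, while your spectral argument makes the projection claim more immediate (you see at once that $T_N$ is diagonal with eigenvalues $0$ and $1$, rather than recognizing a projection from its kernel expansion) and isolates the algebra in a single three-term cancellation. One small presentational point: you should note explicitly that $T_N$ is diagonal because $I$, $B^N B^{*N}$, and $B^{N+1}B^{*(N+1)}$ are simultaneously diagonal in the basis $\{e_n\}$, which is what licenses reading off the spectrum entrywise; and your computation for $m \ge N+1$, namely $(m+1) + N(m-N) - (N+1)(m-N+1) = 0$, does indeed reduce to zero upon expansion.
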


\begin{proof}
Substituting into the inner product 
$\langle ( \, )\, k_w, k_z \rangle_{A^2(D)}$, it suffices to show that
\[
\frac{1 + N(\overline w \, z)^{N+1} - (N+1)(\overline w \, z)^N }{(1 - \overline w\, z)^2} = \sum_{j=0}^{N-1} (j+1)(\overline w \, z)^j, \; \text{for } \, N = 1, 2, \dots \  .
\]

Fix $N \in \mathbb{N}$ and let $x = \overline w \, z$.  Then
\begin{equation*}
 \begin{split}
\frac{1 + N \, x^{N+1} - (N+1)x^N}{(1 - x)^2} 
&  = \frac{(1 - x^N) - N \, x^N(1 - x)}{(1 - x)^2} \\
= &\frac{\sum_{j=0}^{N-1}x^j - N \, x^N}{1-x}\\
= & \sum_{j=0}^{N-1} \frac{x^j(1 - x^{N-j})}{1 - x} \\
= & \sum_{j=0}^{N-1} x^j \sum_{k=0}^{N-1-j} x^k \\ 
= &  \sum_{j=0}^{N-1} (j + 1) x^j.
\end{split}
\end{equation*}
\end{proof}

\begin{lemma}
 \label{lem:bidisk}
   Let $S$ and $W$ denote the operators
  of multiplication by $z$ and $w$
  respectively on $H^2(\mathbb{D}^2)$, the Hardy space on 
  the bidisk $\mathbb{D}^2$ in $\mathbb C^2$. For each $N$, 
\[
 \begin{split}
   I + \sum_{j=1}^N S^jW^{N-j+1} W^{*(N-j+1)}S^{*j} - 
  & \sum_{j=0}^N S^jW^{N-j} W^{*(N-j)}S^{*j} \\
  & = \mathrm{Proj} \, [z^jw^k:  0 \le j+k \le N-1]\\
  & = \sum_{k=0}^{N-1} \sum_{p=0}^{N-1-j} z^p w^k \otimes z^p w^k.
\end{split}
\]
\end{lemma}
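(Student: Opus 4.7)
The plan is to imitate the proof of Lemma \ref{lem:bergman} and test the proposed operator identity against reproducing kernels. On $H^2(\mathbb{D}^2)$ the reproducing kernel is $K_{(\zeta,\eta)}(z,w) = \frac{1}{(1-z\bar\zeta)(1-w\bar\eta)}$, and by Lemma \ref{lem:eigs-adjoints}, $S^*$ and $W^*$ act on $K_{(\zeta,\eta)}$ by multiplication by $\bar\zeta$ and $\bar\eta$ respectively. Setting $x = z\bar\zeta$ and $y = w\bar\eta$, each summand $\langle S^j W^m W^{*m} S^{*j} K_{(\zeta,\eta)}, K_{(z,w)}\rangle$ on the left side equals $x^j y^m/((1-x)(1-y))$. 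Since the monomials $\{z^j w^k\}_{j,k\ge 0}$ are orthonormal in $H^2(\mathbb{D}^2)$, the projection onto monomials of total degree at most $N-1$ has reproducing-kernel inner product $\sum_{j+k\le N-1} x^j y^k$.

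Clearing the universal factor $(1-x)(1-y)$ reduces the operator identity to the scalar claim
\[
1 + \sum_{j=1}^N x^j y^{N-j+1} - \sum_{j=0}^N x^j y^{N-j} = (1-x)(1-y)\sum_{j=0}^{N-1}\sum_{k=0}^{N-1-j} x^j y^k.
\]
I would verify this in two telescoping steps. First pull out $(1-y)$: separate the $j=0$ term of the second sum as $y^N$, combine the two remaining sums into $-(1-y)\sum_{j=1}^N x^j y^{N-j}$, and use $1 - y^N = (1-y)\sum_{k=0}^{N-1} y^k$ to collapse the left side, after reindexing by $k = N-j$, into $(1-y)\bigl[\sum_{k=0}^{N-1} y^k - \sum_{k=0}^{N-1} x^{N-k} y^k\bigr]$. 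Then expand $1 - x^{N-k} = (1-x)\sum_{m=0}^{N-k-1} x^m$ term-by-term inside the bracket to extract the remaining $(1-x)$ factor and obtain the claimed double sum.

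The argument amounts to two one-variable telescopings with some index bookkeeping; no serious obstacle is anticipated beyond tracking the shifted powers $N-j+1$ versus $N-j$ in the two operator sums. The conceptual parallel with the Bergman proof is evident: the kernel-based reduction turns the operator equality into an elementary rational-function identity, and the double sum on the right arises naturally because the polydisk monomials are orthonormal so the projection onto total degree $\le N-1$ has a clean kernel.
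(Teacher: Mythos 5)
Your proposal is correct and follows essentially the same route as the paper: reduce the operator identity to a scalar rational-function identity by pairing against reproducing kernels, recognize the right side as the kernel of the projection onto total degree $\le N-1$, and verify the scalar identity by two successive geometric-series telescopings. The only cosmetic difference is that you extract the $(1-y)$ factor before $(1-x)$ while the paper does the reverse; by the $S\leftrightarrow W$ symmetry of the identity this is immaterial.
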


\begin{proof}
 Again, as in Lemma \ref{lem:bergman}, we apply the above operators 
 to the reproducing kernel $k_{v_1, v_2}$ and take the inner
 product with $k_{u_1, u_2}$.  Thus it suffices to show that
\[
 \begin{split}
   & \frac{1 + \sum_{j=1}^N (u_1 \overline v_1)^{N-j+1} (u_2 \overline v_2)^j -  \sum_{j=0}^N (u_1 \overline v_1)^{N-j} (u_2 \overline v_2)^j} {(1 - u_1 \overline v_1)(1 - u_2 \overline v_2)}\\
& \qquad \qquad = \sum_{j=0}^{N-1} \sum_{p=0}^{N - 1 - j} (u_1 \overline v_1)^j (u_2 \overline v_2)^p,
\end{split}
\]
   for each $N = 1, 2, \dots$ .
   Fix $N \in \mathbb{N},$  let $x = u_1 \overline v_1$ 
   and $y = u_2 \overline v_2$ and observe,
\[
 \begin{split}
\frac{1+ \sum_{j=1}^N \, x^{N-j+1} y^j - \sum_{j=0}^N \, x^{N-j}y^j}{(1-x)(1-y)} & = \frac{(1 - x^N) \sum_{j=1}^N \, x^{N-j}y^j(1-x)}{(1-x)(1-y)}\\
& = \frac{ \sum_{j=0}^{N-1} \, x^j - \sum_{j=0}^{N-1} \, x^j y^{N-j}}{1-y} \\
& = \sum_{j=0}^{N-1} \, x^j ( \sum_{p=0}^{N-1-j} \, y^p)
\end{split}
\]
  to complete the proof. 
\end{proof}

\begin{lemma}
 \label{lem:ball}
   Let $S$ and $W$ denote multiplication by $z$ and $w$
  on $H^2(\mathbb B^2)$, the Hardy space of the unit ball
   $\mathbb B^2$ in $\mathbb C^2$. 
For $N = 1, 2, \dots$,
\[
  \begin{split}
   I + & \sum_{j=0}^{N+1} \, N \left(
  \begin{smallmatrix}
    N+1 \\j
  \end{smallmatrix}
      \right) \, S^{N+1-j}W^jW^{*j}S^{*(N+1-j)} \\
   & \qquad  \succeq \sum_{j=0}^N (N+1) \left(
  \begin{smallmatrix}
     N\\j
  \end{smallmatrix}
       \right) \, S^{N-j}W^jW^{*j}S^{(N-j)^*}.
 \end{split}
\]
\end{lemma}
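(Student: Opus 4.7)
The plan is to follow the template of Lemmas \ref{lem:bergman} and \ref{lem:bidisk}: pair the operator inequality against reproducing kernels, reduce to a scalar kernel identity, and verify positivity there. Recall that an operator $T$ on $H^2(\mathbb B^2)$ is positive semi-definite if and only if the associated kernel $(u,v) \mapsto \langle T k_v, k_u\rangle$ is positive semi-definite, by density of $\{k_v : v \in \mathbb B^2\}$, so it suffices to check this for $T = \mathrm{LHS} - \mathrm{RHS}$.

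First, using Lemma \ref{lem:eigs-adjoints} (so $S^* k_v = \bar v_1 k_v$ and $W^* k_v = \bar v_2 k_v$), I would compute
\[
\langle S^{M-j}W^j W^{*j} S^{*(M-j)} k_v, k_u\rangle = (u_1 \bar v_1)^{M-j}(u_2 \bar v_2)^j \, k(u,v),
\]
and then the binomial theorem collapses each of the sums. With $x = u_1 \bar v_1$, $y = u_2 \bar v_2$, and $k(u,v) = (1 - x - y)^{-2}$, this gives
\[
\langle (\mathrm{LHS} - \mathrm{RHS}) k_v, k_u \rangle = \frac{1 + N(x+y)^{N+1} - (N+1)(x+y)^N}{(1 - x - y)^2}.
\]

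Next, set $t = x + y = \langle u, v\rangle$. The scalar identity established in the proof of Lemma \ref{lem:bergman} (applied now with $t$ in place of $\overline{w}z$) gives
\[
\frac{1 + N t^{N+1} - (N+1) t^N}{(1-t)^2} = \sum_{j=0}^{N-1} (j+1) \, t^j = \sum_{j=0}^{N-1} (j+1) \langle u, v\rangle^j.
\]
Thus the task reduces to showing that $(u,v) \mapsto \sum_{j=0}^{N-1}(j+1)\langle u, v\rangle^j$ is a positive semi-definite kernel on $\mathbb B^2 \times \mathbb B^2$.

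Finally, each $\langle u, v\rangle^j = \langle u^{\otimes j}, v^{\otimes j}\rangle_{(\mathbb C^2)^{\otimes j}}$ is a Gram kernel, hence positive semi-definite, and a positive linear combination of positive semi-definite kernels remains positive semi-definite. No genuine obstacle arises in this argument; the one substantive point is recognizing that the scalar identity of Lemma \ref{lem:bergman}, together with positivity of the powers $\langle u, v\rangle^j$ on the ball, \emph{upgrades} the operator equality of Lemma \ref{lem:bergman} to the operator inequality required here, because on $\mathbb B^2$ the relevant combination of $\langle u, v\rangle^j$'s remains positive but no longer reduces to a single finite-rank projection.
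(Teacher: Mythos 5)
Your proof is correct and follows essentially the same approach as the paper: pair the operator against the reproducing kernels $k_v$, reduce to a scalar kernel identity, and verify positivity there. The computation with $x = u_1\bar v_1$, $y = u_2\bar v_2$, and $t = x+y$ is exactly right, since the Hardy kernel on $\mathbb B^2$ is $(1 - \langle u,v\rangle)^{-2}$ and the binomial theorem collapses each sum to a power of $t$, so the scalar identity from the proof of Lemma \ref{lem:bergman} applies verbatim with $t$ in place of $z\bar w$. The paper is terser (it verifies $N=1$ and appeals to ``an induction argument similar to that in the proof of Lemma \ref{lem:bergman}''), but your version via the scalar identity is a clean and complete way to supply what the paper sketches.

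One point in your closing remark is wrong, though it does not affect the validity of the proof. You say the combination of $\langle u,v\rangle^j$'s ``remains positive but no longer reduces to a single finite-rank projection.'' In fact it does: with $\|z^j w^k\|^2_{H^2(\mathbb B^2)} = j!\,k!/(j+k+1)!$, the reproducing kernel of $M_N := \operatorname{span}\{z^j w^k : j+k < N\}$ is
\[
\sum_{j+k<N} \frac{(j+k+1)!}{j!\,k!}\,u_1^j u_2^k\,\bar v_1^j \bar v_2^k
 \;=\; \sum_{m=0}^{N-1}(m+1)\,\langle u,v\rangle^m,
\]
which is precisely the kernel you derived. So the operator $\mathrm{LHS}-\mathrm{RHS}$ is exactly the projection $P_N$ onto $M_N$, which is the stronger equality the paper actually asserts. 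Your argument proves the inequality the lemma states, which is all that is needed, but you could upgrade it for free by noting the coincidence of kernels.
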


\begin{proof}
  For $N = 1$,
 \[
   I + S^2S^{2^*} + 2 \, S \, W \, W^*S^* + W^2W^{*^2} = 
     2 \, S \, S^* + 2 \, W \,W^* + 1\otimes 1.
 \]

  Let $P_N$ denote the projection of $H^2(\mathbb B^N)$ onto the 
  span of $\{ z^jw^k: \, 0 \le j+k < N\}$.  An induction argument 
  similar to that in the proof of Lemma \ref{lem:bergman} shows that
\[
 \begin{split}
I + & \sum_{j=0}^{N+1} \, N \left(
\begin{smallmatrix}
N+1 \\j
\end{smallmatrix}
\right) \, S^{N+1-j}W^jW^{j^*}S^{(N+1-j)^*} \\
& \qquad - \sum_{j=0}^N (N+1) \left(
\begin{smallmatrix}
N\\j
\end{smallmatrix}
\right) \, S^{N-j}W^jW^{j^*}S^{(N-j)^*} = P_N.
\end{split}
\]
\end{proof}

\begin{proof}[Proof of Theorem \ref{thm:main-aux}]   
  We let $\mathcal{H} (\Omega)=H^2(\mathbb{D}^2)$, the Hardy 
  space on the bidisk. Note that 
   $\mathcal{M}(\mathcal{H}(\mathbb{D}^2))=H^{\infty}(\mathbb{D}^2)$.
   We will show that the equation $A \, X = B$ for 
   $A, B \in H^{\infty}(\mathbb{D}^2)\otimes B(l^2)$ and 
    $A\, A^* \succeq B\, B^*$ cannot always be solved for 
     $X$ in $H^{\infty}(\mathbb{D}^2)\otimes B(l^2)$.

\medskip
  To do this, we will use Lemma \ref{lem:bidisk}. 
 The analogous proofs for Bergman spaces and Hardy 
 space on the unit ball require Lemmas 
 \ref{lem:bergman} and \ref{lem:ball}, respectively. 
  Those proofs have a similar pattern to this one and will be omitted.

\medskip
Suppose that whenever $A, B \in H^{\infty}(\mathbb{D}^2)\otimes B(l^2)$ with $A\, A^* \succeq B\, B^*$, then there exists $X \in H^{\infty}(\mathbb{D}^2)\otimes B(l^2)$ with $A\, X = B$.

\medskip
\noindent
Then from Lemma \ref{lem:bidisk},
\[
I + S^NW \, W^*S^{*N} + \dots + S \, W^NW^{*N}S^* \succeq S^NS^{*N} + \dots + W^NW^{*N}.
\]
So we are assuming that there exists an $N \times N$ matrix of $H^{\infty}(\mathbb{D}^2)$ functions, $[C_{ij}(z,w)]$ so that
\[
[I, S^NW, \dots, S\, W^N] \, [C_{ij}(S, W)] = [S^N, \dots, W^N].
\]

\smallskip
Fix $1 \le k \le N$.  We have for all $z, w \in \mathbb{D}$,
\[
C_{1k}(z,w) + \sum_{j=1}^N \, z^{N-j+1}W^j C_{j+1,k}(z,w) = z^{N-k+1}w^{k-1}.
\]
Thus, the $(N-k+1, k-1)$th coefficient of $C_{1k}(z,w)$ is $1$.

\medskip
Estimating,
\begin{align*}
N+1 \le \sum_{k=1}^{N+1} \, \Vert C_{1k}\Vert_{L^2(T^2)}^2 & = \sum_{k=1}^{N+1} \, \int_{T^2} \, |C_{1k}|^2 d\sigma \\
& \le \underset{(z,w) \in \mathbb{D}^2} {\sup} \sum_{k=1}^{N+1} \, |C_{1k}(z,w)|^2 \\
& \le \underset{(z,w) \in \mathbb{D}^2} {\sup} \, \Vert [C_{jk}(z,w)]\Vert _{B(\mathbb{C}^N)} \\
& = \Vert [C_{jk}(S, W)]\Vert_{B(H^2(\mathbb{D}^2))}.
\end{align*}
Hence any  $[C_{ij}(S,W)]$ solving
\[
[I, S^NW, \dots, S \, W^N] \, [C_{ij}(S, W)] = [S^N, \dots, W^N]
\]
must have
\[
\Vert [C_{jk}(S, W)]\Vert \ge N+1.
\]

\medskip
Let $A_N$ and $B_N$ denote the $(N+1) \times (N+1)$ operator matrix
\[
A_N = \left[
\begin{matrix}
I & S^NW &  \dots & S\,W^N\\
0 & \dots & \dots & 0\\
\vdots & & & \vdots\\
0 & \dots & \dots & 0\\
\end{matrix}
\right] \; \text{and} \;
B_N = \left[
\begin{matrix}
S^N & S^{N-1}W & \dots & W^N\\
0 & \dots & \dots & 0\\
\vdots & & & \vdots\\
0 & \dots & \dots & 0\\
\end{matrix}
\right].
\]

\smallskip
Define $A, B$ by
\[
A = \underset{N=1}{\overset{\infty}{\oplus }} \, \frac{A_N}{\Vert A_N \Vert} \, \text{ and } \, B = \underset{N=1}{\overset{\infty}{\oplus }}\frac{B_N}{\Vert B_N\Vert} \, \text{ acting on }\, \underset{N=1}{\overset{\infty}{\oplus }} \, \left( \underset{j=1}{\overset{N}{\oplus }} \, H^2(\mathbb{D}^2)\right).
\]

\smallskip \noindent
  By Lemma \ref{lem:bidisk}, 
  $A \, A^* \succeq B \, B^*$.  If there exists an analytic Toeplitz 
  operator \, $X = [X_{jk}]_{j,k=1}^{\infty}$ with $ A \, X = B$, 
   then $A_N X_{NN} = B_N$, so 
  $\Vert X\Vert \ge \underset{N}{\sup} \, \Vert X_{NN}\Vert \ge \sup \, (N+1)$,
    a contradiction.
\end{proof}

\section{Examples and Questions}
 \label{sec:egandq}
  It turns out that 
 the multiplier algebra of an $H^2(k)$ can have the property that
 $AA^* \succeq BB^*$ implies the existence of a
 multiplier  $C$ such that 
  $AC=B$, but not necessarily with $C$ a contraction.  

\begin{example}\rm
 \label{ex1}
  For an example, let $k$
  denote the kernel over the unit disk given by 
 \[
    k(z,w) = 1 + 2\frac{z\overline{w}}{1-z \overline{w}}.
 \]
  Choosing $\omega=0$, gives,
 \[
   k(\omega,\omega)-\frac{k(z,\omega)k(\omega,w)}{k(z,w)}
    = 2 \frac{z\overline{w}}{1+z\overline{w}},
 \]
  which is not a positive semi-definite function on 
  $\mathbb D\times \mathbb D.$  Hence $k$ is not an NP kernel
  and it is not possible to factor (with the strict norm
  constraint) in $\CM(k)\otw B(\ell^2).$
  
  On the other hand, it is easy to verify that, with
   $s=(1-z\overline{w})^{-1}$
  the Szeg\"o kernel, 
\[
    s(z,w) \preceq k(z,w) \preceq 2 s(z,w),
\]
  where the inequalities are in the sense of positive semi-definite
  kernels (so in particular $k(z,w)-s(z,w)$ is positive semi-definite).
  It follows that $H^\infty(\mathbb D)=\CM(s)=\CM(k)$ as sets   
  and moreover for $f\in \CM(k)\otimes B(\ell^2)$, that
\[
  \frac12 \|f\|_{\CM(s)} \le \|f\|_{\CM(k)}  \le 2\|f\|_{\CM(s)}.
\]
  Hence,  it is possible to factor in $\CM(k)\otw B(\ell^2)$,
  because it is possible to factor (with the strict norm constraint)
  in $\CM(s)\otw B(\ell^2)$.
\end{example}

  The example naturally leads to the following questions.



\begin{problem}\rm
  Say that an algebra $\mathcal A$
  has the {\it bounded} Douglas property if it satisfies the 
  conditions of Definition \ref{def:douglas}, except for the
  norm constraint $\|C\|\le 1$.  In this case, there exists
  a constant $\gamma,$ independent of $C$,  such that  
  $\|C\|\le \gamma$. Characterize those nice reproducing kernels $k$ 
  for which $\CM(k)$ has the bounded Douglas property. 
\end{problem}

\begin{problem}\rm
 \label{probI}
  Can the $B$ in Theorem \ref{thm:main-aux} be chosen to be $I$?
\end{problem}

  See Trent \cite{T} for the relevance of
  Problem \ref{probI} to the corona problem for the bidisk.

  The following example shows that the hypothesis that $k$ is nice
  is natural.

\begin{example}\rm
 \label{ex:not-nice}
   Let $\Omega=\mathbb C$ and $k(z,w)=\exp(z\overline{w})$.
   In this case, $H^2(k)$ consists of those entire functions $f$
  such that
\[
  \int_{\mathbb C} |f(z)|^2 \exp(-|z|^2) dA  
\]
  is finite. Hence, by Liousville's Theorem,  the only multipliers
  of $H^2(k)$ are constant and thus $\CM(k)=\mathbb C.$ 
  Thus, trivially, $\CM(k)$ has the Douglas property.
  Of  course, $k$ is not nice. 
\end{example} 


\bigskip

\linespread{1.1}

\printindex

\end{document}